\documentclass[11pt,a4paper]{article}
\usepackage{etex} 
\usepackage[latin1]{inputenc}
\usepackage[T1]{fontenc}
\usepackage[right=2cm,left=2cm,top=2cm,bottom=2cm]{geometry}
\usepackage{ifthen}
\usepackage{enumerate}
\usepackage{enumitem}
\usepackage{amsmath}
\usepackage{amsthm}
\usepackage{amsfonts}
\usepackage{amssymb}
\usepackage{latexsym}
\usepackage{ae}
\usepackage{xcolor,colortbl} 
\usepackage{graphics,amsmath,amssymb}
\usepackage[np]{numprint}
\usepackage{hyperref}
\usepackage{multirow}
\usepackage{todonotes}

\npdecimalsign{\ensuremath{.}}

\newtheorem{theorem}{Theorem}

\newtheorem{lem}[theorem]{Lemma}

\theoremstyle{definition}

\theoremstyle{remark}

\numberwithin{equation}{section}

\makeatletter
\newcommand{\subjclass}[2][2020]{%
  \let\@oldtitle\@title%
  \gdef\@title{\@oldtitle\footnotetext{#1 \emph{Mathematics subject classification.} #2}}%
}
\newcommand{\keywords}[1]{%
  \let\@@oldtitle\@title%
  \gdef\@title{\@@oldtitle\footnotetext{\emph{Key words and phrases.} #1.}}%
}
\makeatother

\frenchspacing

\textwidth=16cm
\textheight=25cm
\parindent=16pt
\oddsidemargin=-0.5cm
\evensidemargin=-0.5cm
\topmargin=-0.5cm

\newcommand{\Z}{\mathbb {Z}}

\newcommand{\R}{\mathbb {R}}
\newcommand{\C}{\mathbb {C}}

\newcommand{\Addresses}{{
  \bigskip
  \footnotesize

  O.~Bordell\`es, \textsc{2 all\'{e}e de la combe, 43000 Aiguilhe, France.}\par\nopagebreak
  \textit{E-mail address}: \texttt{borde43@wanadoo.fr}

  \medskip

  L.~T\'oth, \textsc{Department of Mathematics, University of P\'ecs, Ifj\'us\'ag \'utja 6, 7624 P\'ecs, Hungary.}\par\nopagebreak
  \textit{E-mail address}: \texttt{ltoth@gamma.ttk.pte.hu}

}}


\DeclareMathOperator{\md}{mod}

\title{{\bf Mean values of the product of an integer and its modular inverse}}
\date{}

\author{Olivier Bordell\`es and L\'aszl\'o T\'oth}

\subjclass[2020]{Primary 11L07, 11T23; Secondary 11A07.}

\keywords{Dirichlet characters, additive characters, P\'{o}lya-Vinogradov inequality, modular inverse}

\begin{document}

\maketitle

\begin{abstract}
In this note, we extend to a composite modulo a recent result of Chan (2016) dealing with mean values of the product of an integer and its multiplicative inverse modulo a prime number. 
\end{abstract}

\section{Introduction}

\subsection{Background and motivation}

Let $k \geqslant 2$ be any fixed integer. Chan \cite{chan16} studies the sum
$$S_k(p) := \underset{a_1 \dotsb a_k \equiv 1 \; (\md p)}{\sum_{a_1=1}^{p-1} \dotsb \sum_{a_k=1}^{p-1}} a_1 \dotsb a_k$$
where $p$ is a prime number, and shows that
\begin{equation}
   S_k(p) = 2^{-k} p^{k}(p-1)^{k-1} + O \left( p^{\vartheta_k} (\log p)^k \right) \label{eq:Chan}
\end{equation}
where $\vartheta_2 = \frac{5}{2}$ and $\vartheta_k = \frac{3k}{2}$ if $k \geqslant 3$. As can be seen in \cite[Theorem~3]{chan16}, this result also holds for the more general sum
$$S_k(p,m) := \underset{a_1 \dotsb a_k \equiv m \; (\md p)}{\sum_{a_1=1}^{p-1} \dotsb \sum_{a_k=1}^{p-1}} a_1 \dotsb a_k$$
where $m$ is any positive integer satisfying $(m,p)=1$. The main aim of this work is to generalize \eqref{eq:Chan} to any composite modulo $n \geqslant 2$. We therefore investigate the sum
\begin{equation}
   S_k(n,m) := \underset{a_1 \dotsb a_k \equiv m \; (\md n)}{\sum_{a_1=1}^{n} \dotsb \sum_{a_k=1}^{n}} a_1 \dotsb a_k \label{eq:bordtoth}
\end{equation}
where $m$ is any positive integer satisfying $(m,n)=1$, and with the convention that the sum vanishes whenever there exists $j \in \{1, \dotsc, k \}$ such that $(a_j,n) > 1$. 

\subsection{Main results}

As in Chan's paper, it appears that the case $k=2$ is more intricate and needs more work. We prove the following results.

\begin{theorem}
\label{th:k_sup_3}
Let $k \geqslant 3$ be any fixed integer. Then, for $n \geqslant 2$ and $m \geqslant 1$ such that $(m,n)=1$,
$$S_k(n,m) = 2^{-k} n^k \varphi(n)^{k-1} + O^\star \left( (2 \sqrt{3} )^k n^{3k/2} (\log n)^k\right).$$ 
\end{theorem}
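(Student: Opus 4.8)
The plan is to detect the congruence condition using additive characters.The plan is to detect the multiplicative congruence $a_1\cdots a_k\equiv m\ (\md n)$ using the orthogonality relations of the Dirichlet characters modulo $n$. Writing the indicator of the congruence as $\varphi(n)^{-1}\sum_{\chi}\overline{\chi(m)}\,\chi(a_1\cdots a_k)$ and exploiting the complete multiplicativity of $\chi$, the $k$ summations separate and one is led to
\begin{equation*}
S_k(n,m)=\frac{1}{\varphi(n)}\sum_{\chi\ (\md n)}\overline{\chi(m)}\,T(\chi)^{k},\qquad T(\chi):=\sum_{\substack{a=1\\(a,n)=1}}^{n}a\,\chi(a).
\end{equation*}
Since $\chi(a)=0$ whenever $(a,n)>1$, the coprimality convention built into $S_k(n,m)$ is automatic, and the whole problem reduces to understanding the weighted character sum $T(\chi)$.

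The main term will come entirely from the principal character. Indeed, pairing each unit $a$ with $n-a$ gives $T(\chi_0)=\sum_{(a,n)=1}a=\tfrac12 n\varphi(n)$ for $n\geqslant 2$, so that the principal character contributes $\varphi(n)^{-1}\bigl(\tfrac12 n\varphi(n)\bigr)^{k}=2^{-k}n^{k}\varphi(n)^{k-1}$, which is exactly the stated main term. Everything therefore rests on bounding the contribution of the non-principal characters.

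To estimate $T(\chi)$ for $\chi\neq\chi_0$, I would use Abel summation together with $A(x):=\sum_{a\leqslant x}\chi(a)$ and the vanishing of the complete sum $A(n)=0$, which yields $T(\chi)=-\sum_{j=1}^{n-1}A(j)$. Applying an explicit form of the P\'olya--Vinogradov inequality, $|A(x)|\leqslant c\sqrt{n}\,\log n$ uniformly in the cut-off and over all non-principal $\chi$ modulo $n$, one obtains a bound of the shape $|T(\chi)|\leqslant 2\sqrt{3}\,n^{3/2}\log n$. Taking absolute values in the character expansion, bounding $|\overline{\chi(m)}|\leqslant 1$, and using that there are fewer than $\varphi(n)$ non-principal characters then gives
\begin{equation*}
\Bigl|S_k(n,m)-2^{-k}n^{k}\varphi(n)^{k-1}\Bigr|\leqslant\frac{1}{\varphi(n)}\sum_{\chi\neq\chi_0}|T(\chi)|^{k}<\bigl(2\sqrt{3}\,n^{3/2}\log n\bigr)^{k},
\end{equation*}
which is the desired $O^\star$ estimate. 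A useful structural simplification is that even characters drop out entirely: the substitution $a\mapsto n-a$ shows $T(\chi)=-\chi(-1)T(\chi)$, hence $T(\chi)=0$ whenever $\chi(-1)=1$, so only odd non-principal characters actually contribute.

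The main obstacle is the constant bookkeeping: one must invoke a version of the P\'olya--Vinogradov inequality that is explicit and valid for \emph{all} non-principal characters modulo $n$, including imprimitive ones, and then propagate the constant cleanly through Abel summation and the $k$-th power so as to land on $(2\sqrt{3})^{k}$. I also expect the restriction $k\geqslant 3$ to be essential for this crude ``take absolute values and count characters'' strategy: it succeeds precisely because the error exponent $3k/2$ is strictly smaller than the main-term exponent $2k-1$, which holds only for $k>2$. For $k=2$ the error would swamp the main term, so that case genuinely requires the more delicate treatment alluded to in the introduction (mirroring Chan's sharper exponent $\vartheta_2=\tfrac52$).
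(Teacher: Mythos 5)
Your proposal is correct and follows essentially the same route as the paper: orthogonality of Dirichlet characters to separate the variables, extraction of the main term $2^{-k} n^k \varphi(n)^{k-1}$ from the principal character via $\sum_{(a,n)=1} a = \tfrac12 n\varphi(n)$, and Abel summation combined with an explicit P\'olya--Vinogradov inequality to bound $\bigl|\sum_{a\leqslant n} a\chi(a)\bigr| \leqslant 2\sqrt{3}\, n^{3/2}\log n$ for $\chi \neq \chi_0$, exactly as in \eqref{eq:Abel}. Your extra observations --- the identity $T(\chi) = -\sum_{j=1}^{n-1} A(j)$ from $A(n)=0$, and the vanishing of $T(\chi)$ for even $\chi$ --- are sound refinements (the paper itself exploits the latter only in the proof of Theorem~\ref{th:k_sup_4}).
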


When $k \geqslant 4$ and $n \geqslant 3$ is odd, more recent results for mean values of odd Dirichlet characters enable us to slightly improve the above estimate.

\begin{theorem}
\label{th:k_sup_4}
For $k \geqslant 4$, $n \geqslant 3$ odd and $m \geqslant 1$ such that $(m,n)=1$,
$$S_k(n,m) = 2^{-k} n^k \varphi(n)^{k-1} + O \left( n^{\frac{3}{2}(k-2)} \varphi(n)^3 (\log n)^{k-4} (\log \log n)^2 \right).$$ 
\end{theorem}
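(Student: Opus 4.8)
The plan is to detect the congruence $a_1\cdots a_k\equiv m\ (\md n)$ by orthogonality of the Dirichlet characters modulo $n$. Setting
$$T(\chi):=\sum_{\substack{a=1\\ (a,n)=1}}^{n} a\,\chi(a),$$
the coprimality convention built into \eqref{eq:bordtoth} and the relation $[c\equiv m\ (\md n)]=\varphi(n)^{-1}\sum_{\chi}\chi(c)\overline{\chi(m)}$ (valid for $(c,n)=(m,n)=1$) give, after factoring the $k$ variables,
$$S_k(n,m)=\frac{1}{\varphi(n)}\sum_{\chi\bmod n}\overline{\chi(m)}\,T(\chi)^k .$$
The principal character contributes $\varphi(n)^{-1}\bigl(\sum_{(a,n)=1}a\bigr)^k=\varphi(n)^{-1}\bigl(\tfrac12 n\varphi(n)\bigr)^k=2^{-k}n^k\varphi(n)^{k-1}$, the announced main term, where I use the elementary identity $\sum_{1\le a\le n,\,(a,n)=1}a=\tfrac12 n\varphi(n)$ for $n\ge 2$.

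The first decisive step is that every \emph{even} non-principal character drops out: substituting $a\mapsto n-a$ and using $\sum_{(a,n)=1}\chi(a)=0$ for $\chi\neq\chi_0$ yields $T(\chi)=-\chi(-1)T(\chi)$, hence $T(\chi)=0$ whenever $\chi(-1)=1$. Thus only the odd characters survive and
$$\left| S_k(n,m)-2^{-k}n^k\varphi(n)^{k-1}\right|\le\frac{1}{\varphi(n)}\sum_{\substack{\chi\bmod n\\ \chi(-1)=-1}}\left|T(\chi)\right|^k .$$
This is exactly where odd characters enter and where any gain over Theorem~\ref{th:k_sup_3} must originate.

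To exploit the hypothesis $k\ge 4$, I would peel off $k-4$ factors trivially and keep a fourth power,
$$\sum_{\substack{\chi\bmod n\\ \chi(-1)=-1}}\left|T(\chi)\right|^k\le\Bigl(\max_{\chi(-1)=-1}\left|T(\chi)\right|\Bigr)^{k-4}\sum_{\substack{\chi\bmod n\\ \chi(-1)=-1}}\left|T(\chi)\right|^4 .$$
For the maximum, Abel summation (with $\sum_{a\le n}\chi(a)=0$ killing the boundary term) rewrites $T(\chi)$ as an integral of incomplete character sums, so the P\'olya--Vinogradov inequality gives $\max_\chi\left|T(\chi)\right|\ll n^{3/2}\log n$; this produces the factor $(\log n)^{k-4}$. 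The core input is the fourth-moment bound
$$\sum_{\substack{\chi\bmod n\\ \chi(-1)=-1}}\left|T(\chi)\right|^4\ll n^{3}\varphi(n)^{4}(\log\log n)^2 ,$$
where the recent mean-value estimates for odd characters are used: one writes $T(\chi)=n B_{1,\chi}$, relates $B_{1,\chi}$ to $L(1,\chi^\ast)$ for the underlying primitive character $\chi^\ast$, sorts the odd characters by their conductor $d\mid n$, and applies a fourth-moment estimate for $L(1,\cdot)$ at each level, the factor $(\log\log n)^2$ reflecting the Euler factors over $p\mid n$ (governed by $n/\varphi(n)\ll\log\log n$). Combining the three inputs yields
$$\frac{1}{\varphi(n)}\bigl(n^{3/2}\log n\bigr)^{k-4}\,n^{3}\varphi(n)^{4}(\log\log n)^2=n^{\frac32(k-2)}\varphi(n)^{3}(\log n)^{k-4}(\log\log n)^2 ,$$
which is the claimed bound; the oddness of $n$ is inherited from the hypotheses under which the cited moment estimate is available. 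The main obstacle is precisely this fourth-moment step: handling the imprimitive characters and extracting the correct powers of $\varphi(n)$ and $\log\log n$ uniformly in the composite modulus $n$, rather than merely invoking the clean prime-modulus moment $\sum_{\chi}|L(1,\chi)|^4\asymp\varphi(n)$.
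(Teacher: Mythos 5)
Your proposal follows essentially the same route as the paper: orthogonality of Dirichlet characters to isolate the main term $2^{-k}n^k\varphi(n)^{k-1}$, the observation that $T(\chi)=-\chi(-1)T(\chi)$ kills all even non-principal characters, peeling off $k-4$ factors via Abel summation and P\'olya--Vinogradov, and the fourth-moment bound $\sum_{\chi(-1)=-1}|T(\chi)|^4\ll n^3\varphi(n)^4(\log\log n)^2$, which is exactly what the paper imports as a black box from Xu and Zhang \cite{xu08} (their Lemma~9, with $\ell=0$ and $C_n\ll(\log\log n)^2$) --- precisely the ``recent mean-value estimate for odd characters'' you invoke. The final combination is identical, so your argument is correct and matches the paper's proof.
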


The case $k=2$ needs specific estimates involving additive characters and exponential sums. On the other hand, as in Theorem~\ref{th:k_sup_3}, we are able to give a fully explicit error term in this case.

\begin{theorem}
\label{th:k=2}
For all $n \geqslant 2$ and $m \geqslant 1$ such that $(m,n)=1$,
$$S_2(n,m) = 4^{-1} n^2 \varphi(n) + O^\star \Biggl( n^{2} \, \sigma_{1/2}(n) \, (\tau(n) \log en)^2 + \tfrac{1}{4} n \varphi(n) \Biggr)$$ 
where $\sigma_\lambda (n)$ is the sum of the $\lambda$th powers of the divisors of $n$ and $\tau(n) = \sigma_0(n)$ is the number of divisors of $n$.
\end{theorem}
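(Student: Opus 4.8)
The plan is to start from the observation that the congruence $a_1a_2\equiv m\pmod n$ together with $(m,n)=1$ already forces $(a_1,n)=(a_2,n)=1$, so that the coprimality convention is automatic. Writing $\overline a$ for the inverse of $a$ modulo $n$, for each reduced residue $a$ the second variable is then the unique representative $b=b(a)\in\{1,\dots,n-1\}$ of $m\overline a$, and hence $S_2(n,m)=\sum_{(a,n)=1}a\,b(a)$, a sum of an integer against a shift of its modular inverse. To extract the main term I would centre both variables at $n/2$ and expand
$$a\,b=\tfrac{n^2}{4}+\tfrac{n}{2}\Bigl(\bigl(a-\tfrac{n}{2}\bigr)+\bigl(b-\tfrac{n}{2}\bigr)\Bigr)+\bigl(a-\tfrac{n}{2}\bigr)\bigl(b-\tfrac{n}{2}\bigr).$$
Summing over reduced residues, the constant term gives exactly $\tfrac14 n^2\varphi(n)$, while the two linear terms vanish identically because the reduced residues are symmetric under $a\mapsto n-a$ (and $b(a)$ runs over all reduced residues as $a$ does). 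Everything therefore reduces to estimating the bilinear remainder $E:=\sum_{(a,n)=1}\bigl(a-\tfrac{n}{2}\bigr)\bigl(b(a)-\tfrac{n}{2}\bigr)$.

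For $E$ I would use the finite Fourier (discrete sawtooth) expansion of the representative: for $a\in\{1,\dots,n-1\}$ one has $a-\tfrac n2=-\tfrac12+\sum_{h=1}^{n-1}g(h)e(ha/n)$ with $g(h)=\bigl(e(-h/n)-1\bigr)^{-1}$ and $|g(h)|=\bigl(2\sin(\pi h/n)\bigr)^{-1}$, where $e(x)=e^{2\pi i x}$. Substituting this expansion for both factors and using $b(a)\equiv m\overline a$, the inner sum over $a$ collapses to the complete Kloosterman sum $\mathcal K(h,km;n):=\sum_{(a,n)=1}e\bigl((ha+km\overline a)/n\bigr)$, so that $E$ becomes an explicit weighted double sum $\sum_{h,k}g(h)g(k)\mathcal K(h,km;n)$ together with lower-order pieces carried by the principal frequency (Ramanujan sums $c_n(h)=\sum_{(a,n)=1}e(ha/n)$). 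It is precisely this principal/boundary contribution that I expect to be the source of the secondary explicit term $\tfrac14 n\varphi(n)$: once the balanced main contribution of the completed inner sum is accounted for, the residual boundary piece, summed against the weights, is of exactly this size.

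The heart of the proof is then bounding the Kloosterman double sum. I would invoke the Weil bound for a composite modulus, $|\mathcal K(u,v;n)|\le\tau(n)(u,v,n)^{1/2}n^{1/2}$, noting that $(h,km,n)=(h,k,n)$ since $(m,n)=1$. Inserting this and using $\sum_{1\le k\le n-1}|g(k)|\asymp\tfrac n\pi\log n$, I would bound the symmetric gcd factor by $\sqrt{(h,k,n)}\le\sqrt{(h,n)}$ and group the $h$-summation according to the value $d=(h,n)\mid n$; since $\sum_{(h,n)=d}|g(h)|\ll\tfrac nd\log(en)$, the divisor sum $\sum_{d\mid n}\sqrt d\cdot\tfrac nd=n\,\sigma_{-1/2}(n)=n^{1/2}\sigma_{1/2}(n)$ produces the factor $\sigma_{1/2}(n)$ through the identity $\sigma_{-1/2}(n)=n^{-1/2}\sigma_{1/2}(n)$, while the two logarithmic sums together with the divisor functions assemble into $(\tau(n)\log en)^2$. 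This yields $E\ll n^2\sigma_{1/2}(n)(\tau(n)\log en)^2+\tfrac14 n\varphi(n)$, and combining it with the exact main term gives the statement.

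The main obstacle, and where the real work lies, is twofold. First, the composite-modulus Kloosterman estimate itself must be set up carefully (it ultimately rests on Weil's bound combined with the Chinese Remainder Theorem and an honest treatment of the prime-power moduli). Second, because the theorem demands a fully explicit $O^\star$ constant, every step of the gcd and divisor bookkeeping has to be carried out with numerical constants rather than $\ll$-estimates; this forces somewhat lossy divisor inequalities, and it is this loss — rather than anything intrinsic — that is responsible for the appearance of $(\tau(n))^2$ in place of a single $\tau(n)$, as well as for the precise shape of the $\tfrac14 n\varphi(n)$ secondary term.
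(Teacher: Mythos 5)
Your proposal is correct in substance and would prove the theorem (indeed a slightly stronger form of it), but it takes a genuinely different route from the paper. The paper detects the congruence with additive characters, writing $S_2(n,m)=\frac1n\sum_{k=1}^n\bigl(\sum_{(a,n)=1}a\,e_n(km\overline a)\bigr)\bigl(\sum_{(b,n)=1}b\,e_n(-kb)\bigr)$, isolates $k=n$ (giving $\frac14 n\varphi(n)^2$), evaluates the $b$-sum exactly by M\"obius inversion and geometric series (Lemma~\ref{le:sum_tech}), and bounds the weighted incomplete sum $\sum_{(a,n)=1}a\,e_n(km\overline a)$ by partial summation against a completed Kloosterman estimate (Lemma~\ref{le:kloos}); the piece $R_{22}$ coming from the frequencies with $\frac nd\mid k$ is then computed in closed form in \eqref{eq:step_3}, and it is this exact evaluation that simultaneously converts $\frac14 n\varphi(n)^2$ into the main term $\frac14 n^2\varphi(n)$ and leaves behind the explicit $-\frac14 n\varphi(n)$ of the error term. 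You instead extract $\frac14 n^2\varphi(n)$ exactly at the outset (your symmetry argument is sound: $n/2$ is never a reduced residue for $n>2$, and $a\mapsto m\overline a$ permutes the reduced residues), and then expand both sawtooth factors in finite Fourier series, so that the remainder $E$ becomes a double sum of \emph{complete} Kloosterman sums weighted by $g(h)g(k)$, to which the Weil--Estermann bound $|\mathcal K(u,v;n)|\le\tau(n)(u,v,n)^{1/2}n^{1/2}$ applies directly; the gcd bookkeeping (grouping by $d=(h,n)$, $\sigma_{-1/2}(n)=n^{-1/2}\sigma_{1/2}(n)$) mirrors Lemmas~\ref{le:sum_f(n,k)}--\ref{le:sum_gcd(n,k)}, and the paper's Lemma~\ref{le:sum} makes it explicit: $\sum_k|g(k)|\le\frac n2\log(en)$ and $\sum_h|g(h)|(h,n)^{1/2}\le\frac12 n^{1/2}\sigma_{1/2}(n)\log(en)$, so the double sum is at most $\frac14 n^2\sigma_{1/2}(n)\tau(n)(\log en)^2$. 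Your route thus avoids the completion step and loses only one factor $\tau(n)$ instead of $\tau(n)^2$; the paper's route stays entirely within elementary explicit lemmas.

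One point in your sketch is wrong, although it does not damage the proof: the principal-frequency (Ramanujan sum) pieces are \emph{not} the source of a term of size $\frac14 n\varphi(n)$. They can be evaluated exactly. In your notation,
$$E=\frac{\varphi(n)}{4}-\frac12\sum_{h=1}^{n-1}g(h)\,c_n(h)-\frac12\sum_{k=1}^{n-1}g(k)\,c_n(km)+\sum_{h,k=1}^{n-1}g(h)g(k)\,\mathcal K(h,km;n),$$
and $(m,n)=1$ gives $c_n(km)=c_n(k)$, while $c_n(h)=\sum_{d\mid(h,n)}d\mu(n/d)$ together with the classical identity $\sum_{j=1}^{q-1}\bigl(1-e_q(j)\bigr)^{-1}=\frac{q-1}{2}$ yields $\sum_{h=1}^{n-1}g(h)c_n(h)=\frac{\varphi(n)}{2}$ exactly. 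Hence the three boundary pieces total $\frac{\varphi(n)}{4}-\frac{\varphi(n)}{4}-\frac{\varphi(n)}{4}=-\frac{\varphi(n)}{4}$: in your decomposition the secondary term is $-\frac14\varphi(n)$, a full factor $n$ smaller than the paper's $-\frac14 n\varphi(n)$, which is an artifact of the paper's particular splitting rather than an intrinsic feature of the problem. Since the theorem's $O^\star$-term is only an upper bound, your argument still proves the statement with room to spare; but you should not expect, nor try to engineer, a $\frac14 n\varphi(n)$ contribution out of the Ramanujan sums --- it is not there.
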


In order to study the sharpness of the previous result, it can be interesting to investigate the mean square of its error term. When $n$ is odd, we can prove the next result.

\begin{theorem}
\label{th:mean_square}
For all $n \geqslant 3$ odd and all $\varepsilon >0$ small, we have
$$\sum_{\substack{m=1 \\ (m,n)=1}}^n \left| S_2(n,m) - 4^{-1} n^2 \varphi(n) \right|^2 = \frac{5 C_n}{144} \left( n \varphi(n) \right)^3 + O \left(n^{5 + \varepsilon} \right) $$
where the constant $C_n$ is given in \eqref{eq:C} and depends only on $n$. In particular, for all $n$ odd sufficiently large, there exists an integer $m \in \{1, \dotsc,n\}$ such that $(m,n)=1$ and
$$\left| S_2(n,m) - 4^{-1} n^2 \varphi(n) \right| \gg n^{3/2} \varphi(n).$$
\end{theorem}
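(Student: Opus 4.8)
The plan is to compute the mean square of the error term via orthogonality and the explicit formula for $S_2(n,m)$, then extract the pointwise lower bound by a pigeonhole (averaging) argument.

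Let me set up the framework. The key is to express $S_2(n,m)$ using Dirichlet characters. Since $a_1 a_2 \equiv m \pmod n$ with both $a_i$ coprime to $n$, I would write the congruence condition via the orthogonality of characters modulo $n$:
$$S_2(n,m) = \frac{1}{\varphi(n)} \sum_{\chi \bmod n} \overline{\chi(m)} \left( \sum_{\substack{a=1 \\ (a,n)=1}}^n a\, \chi(a) \right)^2.$$
The principal character $\chi_0$ contributes the main term, since $\sum_{(a,n)=1} a = \tfrac12 n \varphi(n)$ gives $\tfrac{1}{\varphi(n)} \cdot \tfrac14 n^2 \varphi(n)^2 = \tfrac14 n^2 \varphi(n)$. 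Writing $T(\chi) := \sum_{(a,n)=1} a\,\chi(a)$, the error term is $E(n,m) := S_2(n,m) - \tfrac14 n^2\varphi(n) = \frac{1}{\varphi(n)} \sum_{\chi \neq \chi_0} \overline{\chi(m)}\, T(\chi)^2$.

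Now I would compute the mean square. By orthogonality,
$$\sum_{\substack{m=1\\(m,n)=1}}^n |E(n,m)|^2 = \frac{1}{\varphi(n)^2} \sum_{\chi \neq \chi_0} \sum_{\psi \neq \chi_0} T(\chi)^2 \,\overline{T(\psi)^2} \sum_{\substack{m\\(m,n)=1}} \overline{\chi(m)}\psi(m) = \frac{1}{\varphi(n)} \sum_{\chi \neq \chi_0} |T(\chi)|^4,$$
so everything reduces to evaluating $\sum_{\chi \neq \chi_0} |T(\chi)|^4$. The crucial observation is that for $n$ odd the even characters should contribute to lower order: when $\chi$ is even, $T(\chi)$ is expected to be small (roughly $O(n^{1+\varepsilon})$ via partial summation and Pólya–Vinogradov), whereas for $\chi$ odd there is a main contribution. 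I expect the dominant term to come from the odd characters, where one has an exact or near-exact evaluation. For a primitive odd character $T(\chi)$ relates to $L(1,\chi)$ and the Gauss sum; more robustly, I would evaluate $\sum_{\chi \text{ odd}} |T(\chi)|^4$ by opening the fourth power as a quadruple sum over $a,b,c,d$ coprime to $n$ and applying orthogonality again, which forces $ab \equiv cd$ type conditions and produces the main term $\tfrac{5C_n}{144}(n\varphi(n))^3$ — the constant $C_n$ and the fraction $\tfrac{5}{144}$ arising from the arithmetic of counting solutions weighted by $abcd$.

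The hardest step will be the precise evaluation of $\sum_{\chi \neq \chi_0}|T(\chi)|^4$ with the correct constant and a genuine power-saving error $O(n^{5+\varepsilon})$. This requires carefully separating the diagonal contribution (giving the main term with constant $\tfrac{5C_n}{144}$) from the off-diagonal, and controlling the even-character contribution and the cross terms. I would first treat $n$ prime (where characters are cleanest and $C_n$ has a transparent form), identify the exact shape of $C_n$ to pin down equation~\eqref{eq:C}, then lift to general odd $n$ via the multiplicative structure of characters modulo $n$ and the coprimality constraints. Finally, the pointwise lower bound follows immediately: if every $m$ satisfied $|E(n,m)| = o(n^{3/2}\varphi(n))$, then the mean square would be $o\bigl(\varphi(n)\cdot n^3\varphi(n)^2\bigr) = o\bigl((n\varphi(n))^3/n\bigr)$, contradicting the main term of size $\asymp (n\varphi(n))^3$ once divided by the number $\varphi(n)$ of terms; so some $m$ must have $|E(n,m)| \gg n^{3/2}\varphi(n)$, since the average of $|E|^2$ is $\asymp n^3 \varphi(n)^2$.
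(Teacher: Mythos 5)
Your reduction of the mean square to $\frac{1}{\varphi(n)}\sum_{\chi\neq\chi_0}|T(\chi)|^4$ via orthogonality is exactly the paper's first step, and your final averaging/pigeonhole argument for the pointwise lower bound is correct. But the core of the theorem is precisely the step you defer: the asymptotic
$$\sum_{\substack{\chi \, (\mathrm{mod}\ n) \\ \chi(-1)=-1}} \Bigl| \sum_{a=1}^{n} a \chi(a) \Bigr|^4 = \frac{5C_n}{144}\, n^3 \varphi(n)^4 + O\left(n^{6+\varepsilon}\right),$$
with the exact constant $\tfrac{5}{144}$, the explicit Euler product $C_n$, and a power-saving error. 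This is not a routine diagonal/off-diagonal computation that one can wave at by "opening the fourth power": it is the main result (Lemma~9) of Xu and Zhang, \emph{Fourth power mean of character sums}, Acta Arith.\ \textbf{135} (2008), and the paper simply imports it (Lemma~\ref{le:xu08}, applied with $\ell=0$). Your proposal neither proves this evaluation nor cites it — you explicitly say you would "pin down" $C_n$ by the computation, i.e.\ you do not yet know what the main term is. As written, the proof has a hole exactly where all the difficulty lives.

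There is a second, smaller but genuine, error: your treatment of the even characters. You claim $T(\chi)\ll n^{1+\varepsilon}$ for even $\chi\neq\chi_0$ "via partial summation and Pólya--Vinogradov", but those tools only give $T(\chi)\ll n^{3/2}\log n$, and that bound is useless here: summing $|T(\chi)|^4 \ll n^6(\log n)^4$ over the $\asymp\varphi(n)/2$ even characters and dividing by $\varphi(n)$ produces a term of size $n^6(\log n)^4$, which swamps the main term $\asymp (n\varphi(n))^3/\varphi(n)\cdot\varphi(n)$. What saves the argument — and what the paper uses — is the exact identity $\sum_{a=1}^n a\chi(a) = -\chi(-1)\sum_{a=1}^n a\chi(a)$ for $\chi\neq\chi_0$ (from the substitution $a\mapsto n-a$), which forces $T(\chi)=0$ identically for every even non-principal character. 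You need this vanishing, not an upper bound; with it, the sum over $\chi\neq\chi_0$ collapses to the sum over odd characters, and the theorem then follows immediately from the Xu--Zhang fourth-moment formula.
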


Note that $n^{3/2} \varphi(n) \gg n^{5/2} (\log \log n)^{-1}$ while $n^{2} \, \sigma_{1/2}(n) \, (\tau(n) \log en)^2 \ll n^{5/2+\varepsilon}$. Hence, when $n$ is odd, the error term in Theorem~\ref{th:k=2} is sharp apart from the $n^\varepsilon$-factor.

\subsection{Notation}

$n \geqslant 2$, $k \geqslant 2$ and $m \geqslant 1$ are integers, and we always assume that $(m,n)=1$. For all $x \in \R$, we set $e_n(x) := e^{\frac{2 i \pi x}{n}}$ and $\| x \|$ is the distance to the nearest integer. As for arithmetic functions, $\mu$ is the M\"{o}bius function, $\varphi$ is the Euler totient function and, for all $\lambda \in \R$, let $\sigma_\lambda (n) := \sum_{d \mid n} d^\lambda$. It is customary to set $\tau := \sigma_0$. The main sum $S_k(n,m)$ studied here is defined in \eqref{eq:bordtoth}. Finally, we will use Ramar\'{e}'s notation 
$$f(n) = M(n) + O^\star (R(n)) \iff \forall n \geqslant 2, \ \left| f(n) - M(n) \right | \leqslant R(n).$$

\section{Proofs of Theorems~\ref{th:k_sup_3} and~\ref{th:k_sup_4}}

\subsection{Theorem~\ref{th:k_sup_3}}

By the orthogonality relations of the Dirichlet characters, we derive
\begin{align*}
   S_k(n,m) &= \frac{1}{\varphi(n)} \sum_{a_1=1}^n \dotsb \sum_{a_k=1}^n a_1 \dotsb a_k \sum_{\chi \, (\md n)} \overline{\chi}(m) \chi \left( a_1 \right) \dotsb \chi \left( a_k \right) \\
   &= \frac{1}{\varphi(n)} \sum_{\chi \, (\md n)} \overline{\chi}(m) \left( \sum_{a=1}^n a \chi(a) \right)^k \\
   &=  \frac{1}{\varphi(n)} \Biggl( \sum_{\substack{a =1 \\(a,n) = 1}}^n a \Biggr)^k + \frac{1}{\varphi(n)} \sum_{\substack{\chi \, (\md n) \\ \chi \neq \chi_0}} \overline{\chi}(m) \left( \sum_{a =1}^n a \chi(a) \right)^k \\
   & = 2^{-k} n^k \varphi(n)^{k-1} + R_k(n,m)
\end{align*}
and using Abel summation and the P\'{o}lya-Vinogradov inequality, we get
\begin{equation}
   \left| \sum_{a \leqslant n} a \chi(a) \right| \leqslant 2n \, \max_{A \leqslant n} \left| \sum_{a \leqslant A} \chi(a) \right| \leqslant 2\sqrt{3} \, n^{3/2} \log n \label{eq:Abel}
\end{equation}
so that $\left| R_k(n,m) \right| \leqslant (2 \sqrt{3} )^k n^{3k/2}  \left( \log n \right)^k$.
\qed

\subsection{Theorem~\ref{th:k_sup_4}}

In this section, we assume $k \geqslant 4$ and $n \geqslant 3$ to be an \textit{odd} integer. The proof rests on the following result, which is \cite[Lemma~9]{xu08}.

\begin{lem}
\label{le:xu08}
Let $q \in \Z_{\geqslant 3}$ odd and set
\begin{equation}
   C_q := \prod_{p^\alpha \| q} \left( 1 + \frac{2p^3+p^2-1}{(p^2+1)(p^2+p+1)} - \frac{1}{p^{3 (\alpha - 1)} (p^2+p+1)} \right). \label{eq:C}
\end{equation}
Then, for all $\ell \in \Z_{\geqslant 0}$
$$\sum_{\substack{\chi \, (\md q) \\ \chi(-1) = -1}} \overline{\chi} \left( 2^\ell \right) \left| \sum_{a = 1}^{q} a \chi(a) \right|^4 = C_q \, \frac{(3 \ell +5)q^3 \varphi(q)^4}{72 \times 2^{\ell+1}} + O_\ell \left( q^{6 + \varepsilon} \right).$$
\end{lem}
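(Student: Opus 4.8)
The plan is to convert the fourth moment into a weighted counting problem by orthogonality, and then to isolate its main term from a correlation of fluctuations. Writing $T(\chi) := \sum_{a=1}^{q} a\chi(a)$ and grouping the products $ab$ according to their residue modulo $q$, one has $T(\chi)^2 = \sum_{n\,(\md q)} W(n)\chi(n)$, where
$$W(n) := \sum_{\substack{a,b=1\\ (ab,q)=1\\ ab\equiv n\,(\md q)}}^{q} ab$$
is real-valued and vanishes unless $(n,q)=1$. Since $W$ is real, $|T(\chi)|^4 = \sum_{n,n'} W(n)W(n')\chi(n)\overline{\chi}(n')$; inserting the weight $\overline{\chi}(2^\ell)$ and applying the odd-character projector $\tfrac12\bigl(1-\chi(-1)\bigr)$ together with the orthogonality relation for $\sum_{\chi}\chi(t)$, I would obtain
$$\sum_{\substack{\chi\,(\md q)\\ \chi(-1)=-1}} \overline{\chi}(2^\ell)\,\bigl|T(\chi)\bigr|^4 = \frac{\varphi(q)}{2}\bigl(\Sigma_+-\Sigma_-\bigr), \qquad \Sigma_{\pm} := \sum_{\substack{(nn',q)=1\\ n\equiv \pm 2^\ell n'\,(\md q)}} W(n)\,W(n').$$
Everything then rests on a precise understanding of $W(n)$.

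For $(a,q)=1$ the congruence $ab\equiv n$ with $b\in\{1,\dots,q\}$ has the unique solution $b = q\{na^{-1}/q\}$, so that
$$W(n) = q\sum_{\substack{a=1\\ (a,q)=1}}^{q} a\Bigl\{\frac{na^{-1}}{q}\Bigr\} = \frac{q^2\varphi(q)}{4} + E(n),$$
where the constant part comes from replacing the sawtooth by its mean $\tfrac12$, and $E(n)$ is the fluctuation obtained from the finite Fourier (sawtooth) expansion of $\{\cdot\}$ over $\Z/q\Z$; this writes $E(n)$ as a combination, with weights $\asymp 1/h$, of the incomplete sums $\sum_{(a,q)=1} a\,e_q(hn a^{-1})$. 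Two structural facts drive the computation. First, for $(a,q)=1$ one has $\{-na^{-1}/q\}=1-\{na^{-1}/q\}$, whence $W(n)+W(-n)=\tfrac12 q^2\varphi(q)$ and therefore $E$ is an \emph{odd} function of $n$. Second, the antisymmetrization $\Sigma_+-\Sigma_-$ is exactly what exploits this oddness: substituting $W=\tfrac14 q^2\varphi(q)+E$ into $\Sigma_\pm$, the constant$\times$constant term is sign-independent and cancels, while the cross terms vanish because $\sum_{n'}E(\pm 2^\ell n')=\sum_{n'}E(n')=0$. What survives is purely
$$\Sigma_+-\Sigma_- = 2\sum_{\substack{n'=1\\ (n',q)=1}}^{q} E(2^\ell n')\,E(n'),$$
so the large symmetric part of order $q^7$ drops out and the main term is produced entirely by the correlation of the fluctuations, giving $\sum_{\chi(-1)=-1}\overline{\chi}(2^\ell)|T(\chi)|^4=\varphi(q)\sum_{n'}E(2^\ell n')E(n')$.

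The final step is the asymptotic evaluation of $\sum_{n'} E(2^\ell n')E(n')$. Opening both copies of $E$ by their Fourier expansions and carrying out the $n'$-sum first turns it into a Ramanujan sum $\sum_{(n',q)=1} e_q\bigl(n'(h2^\ell a^{-1}+h'a'^{-1})\bigr)$. The diagonal contribution, coming from the frequencies with $h2^\ell a' \equiv -h'a\,(\md q)$ (where the Ramanujan sum equals $\varphi(q)$), yields the main term: reducing the resulting sum over $a,a',h,h'$ to prime-power moduli by the Chinese Remainder Theorem and multiplicativity recovers the Euler product $C_q$ of \eqref{eq:C}, while the interaction of the weights $1/(hh')$ with the dilation by $2^\ell$ produces both the rescaling $2^{-\ell-1}$ and the linear growth $(3\ell+5)$. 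The off-diagonal terms, where that congruence fails, I would control by the bound $|c_q(M)|\le (M,q)$ for the Ramanujan sum combined, after summation over $a,a'$, with the cancellation in the ensuing Kloosterman sums (Weil bound) and the divisor bound, contributing $O_\ell(q^{5+\varepsilon})$ to the correlation, i.e.\ $O_\ell(q^{6+\varepsilon})$ to the whole sum.

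I expect this last evaluation to be the main obstacle. Because the trivial bound on $\sum_{n'}E(2^\ell n')E(n')$ is already $O(q^{6+\varepsilon})$, of the same order as the main term of the correlation, one cannot merely estimate but must extract the diagonal main term exactly. Tracking how the Fourier frequencies interact with the dilation by $2^\ell$ so as to produce the precise factor $(3\ell+5)/2^{\ell+1}$, and carrying the multiplicative bookkeeping cleanly through the Chinese Remainder Theorem so as to reproduce exactly the arithmetic constant $C_q$, are the delicate points. Equally essential is the saving of one power of $q$ in the off-diagonal terms — so that they contribute only $O(q^{5+\varepsilon})$ to the correlation and hence $O_\ell(q^{6+\varepsilon})$ to the whole sum, cleanly separated from the main term of order $q^7$ — which is precisely where the cancellation in the Kloosterman sums is used.
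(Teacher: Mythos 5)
First, a point of comparison you could not have known: the paper does \emph{not} prove this lemma at all --- it is quoted verbatim from \cite[Lemma~9]{xu08}, whose proof runs through fourth-power mean values of character sums and $L$-functions (the weight $\overline{\chi}(2^\ell)$ there arises from expanding factors of the type $(2-\chi(2))^{-1}$ in geometric series), not through the elementary correlation identity you set up. That said, your opening reduction is correct and rather elegant: writing $T(\chi)^2=\sum_n W(n)\chi(n)$, using the odd projector and orthogonality to get $\frac{\varphi(q)}{2}(\Sigma_+-\Sigma_-)$, the formula $W(n)=q\sum_{(a,q)=1}a\{na^{-1}/q\}$, the reflection $W(n)+W(-n)=\tfrac12 q^2\varphi(q)$ (so $E$ is odd), and $\sum_n E(n)=0$ all check out, and they do yield the exact identity $\sum_{\chi(-1)=-1}\overline{\chi}(2^\ell)|T(\chi)|^4=\varphi(q)\sum_{(n',q)=1}E(2^\ell n')E(n')$.

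The genuine gap is that everything the lemma actually asserts --- the Euler product $C_q$, the factor $(3\ell+5)/(72\cdot 2^{\ell+1})$, and the error term $O_\ell(q^{6+\varepsilon})$ --- lives entirely in the final step you describe but do not carry out, and you say so yourself. Two pieces are missing, and neither is routine. (i) The diagonal: after imposing $h2^\ell a'\equiv -h'a \ (\md q)$ you still face the weighted count $\sum_{h,h'}(hh')^{-1}\sum_{a,a'}aa'$ over a congruence in four variables, which itself splits into main and secondary terms; nothing in the sketch shows this evaluates exactly to $C_q(3\ell+5)q^3\varphi(q)^3/(72\cdot 2^{\ell+1})$, and the provenance of the linear factor $3\ell+5$ is only asserted. (ii) The off-diagonal: your own accounting shows the trivial (Cauchy--Schwarz) bound on the correlation is $O(q^{6+\varepsilon})$, the same order as its main term; moreover the bound $|c_q(M)|\leqslant (M,q)$ plus divisor estimates recovers only $q^{6+\varepsilon}$ again, and applying the Weil bound to the two $a$-sums separately merely gives the pointwise bound $|E(n)|\ll q^{5/2+\varepsilon}$, hence once more $q^{6+\varepsilon}$. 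So the one full power of $q$ you need must come from a genuine joint exploitation of the $n'$-average (Ramanujan sums) and Kloosterman cancellation, and no mechanism for that is exhibited --- this is precisely where the cited source's $L$-function machinery does real work. As it stands, then, your proposal is a correct and promising reduction followed by a program: the steps that carry the actual content of the lemma remain unproved.
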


It should be pointed out that, for all positive integers $q$, we have
$$\prod_{p \mid q} \left( 1 + \frac{4}{5p} \right) \leqslant C_q \leqslant \prod_{p \mid q} \left( 1 + \frac{2}{p} \right)$$
and in particular that $C_q$ is unbounded and $C_q \ll (\log \log q)^2$. We will make use of this result with $q=n$ and $\ell = 0$ in the following weaker form: under the hypotheses of the lemma, we have
\begin{equation}
   \sum_{\substack{\chi \, (\md n) \\ \chi(-1) = -1}} \left| \sum_{a = 1}^{n} a \chi(a) \right|^4 \ll n^3 \varphi(n)^4 (\log \log n)^2. \label{eq:xu08}
\end{equation}

We now are in a position to show Theorem~\ref{th:k_sup_4}. First notice that, for all $\chi \neq \chi_0$, we have the identity $\displaystyle \sum_{a=1}^{q} a \chi(a) = - \chi(-1) \sum_{a=1}^{q} a \chi(a)$ (cf. the proof of \cite[Theorem~12.20]{apo76}), so that this sum vanishes if $\chi$ is even, and hence
$$S_k(n,m) =  2^{-k} n^k \varphi(n)^{k-1} + R^{\, \textrm{odd}}_k(n,m)$$
where
$$R^{\, \textrm{odd}}_k(n,m) := \frac{1}{\varphi(n)} \sum_{\substack{\chi \, (\md n) \\ \chi(-1)=-1}} \overline{\chi}(m) \left( \sum_{a =1}^n a \chi(a) \right)^k.$$
Now using \eqref{eq:Abel} and \eqref{eq:xu08}, we derive
\begin{align*}
   \sum_{\substack{\chi \, (\md n) \\ \chi(-1)=-1}}  \left| \sum_{a =1}^n a \chi(a) \right|^k &= \sum_{\substack{\chi \, (\md n) \\ \chi(-1)=-1}} \left| \sum_{a =1}^n a \chi(a) \right|^{k-4} \left| \sum_{a =1}^n a \chi(a) \right|^4 \\
   & \ll n^{\frac{3}{2}(k-4)} (\log n)^{k-4} \times n^3 \varphi(n)^4 (\log \log n)^2 \\
   & \ll n^{\frac{3}{2}(k-2)} \varphi(n)^4(\log n)^{k-4} (\log \log n)^2
\end{align*}
implying the asserted estimate.
\qed

\section{Proof of Theorem~\ref{th:k=2}}

\subsection{Technical lemmas}

\begin{lem}
\label{le:kloos}
For all $n \in \Z_{\geqslant 1}$, $k \in \Z_{\geqslant 1}$ and $m \in \Z_{\geqslant 1}$ such that $(m,n)=1$,
$$\left| \sum_{\substack{a=1 \\ (a,n)=1}}^n a \, e_n \left( km \overline{a} \right ) \right| \leqslant 2 n^{3/2} \tau(n) (n,k)^{1/2} \log (en).$$
\end{lem}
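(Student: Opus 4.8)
The plan is to evaluate the sum over $a$ coprime to $n$ by using Abel summation to separate the smooth weight $a$ from the character sum, and then to bound the resulting incomplete Kloosterman-type sums. The weight $a \mapsto a$ is monotone on $[1,n]$, so partial summation reduces the bound to controlling
$$\max_{A \leqslant n} \left| \sum_{\substack{a=1 \\ (a,n)=1}}^{A} e_n\left( km \overline{a} \right) \right|,$$
at the cost of a factor of size $O(n)$. The main work is therefore to produce a good bound on these incomplete Kloosterman sums.

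To bound the incomplete sums I would complete them using additive characters. Writing the indicator of $a \leqslant A$ via the standard Fourier expansion of the interval (or equivalently a geometric-sum kernel), one converts the incomplete sum into a weighted average over complete sums
$$K(km, h; n) := \sum_{\substack{a=1 \\ (a,n)=1}}^{n} e_n\left( km \overline{a} + h a \right),$$
with the kernel weights contributing a factor $\sum_{h} \min(A, 1/\|h/n\|) \ll n \log n$ after summation. The completion step is routine Fourier analysis; the essential input is a uniform bound on the complete twisted Kloosterman sums $K(km,h;n)$.

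The hard part is estimating $K(km,h;n)$ for composite $n$. Here I would invoke multiplicativity: by the Chinese Remainder Theorem, $K(\cdot,\cdot;n)$ factors over prime powers $p^\alpha \| n$, reducing everything to the prime-power case. For each local factor I would use the classical Weil bound for Kloosterman sums at primes and its Sali\'e-type analogue for prime powers, which yield a bound of the shape $p^{\alpha/2} (p^\alpha, km)^{1/2}$ per factor (with the gcd accounting for degenerate cases where $p \mid km$). Multiplying over the prime-power divisors produces the combined bound $n^{1/2} (n,k)^{1/2}$ times a divisor-counting factor, since $(m,n)=1$ forces $(km,n)=(k,n)$. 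The factor $\tau(n)$ arises from collecting the local exponents and from the completion step, and tracking the absolute constant carefully gives the stated $2 n^{3/2}\tau(n)(n,k)^{1/2}\log(en)$, where the extra $n$ comes from the Abel-summation weight and the $\log(en)$ from the completion kernel.

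I expect the principal obstacle to be obtaining the Weil-type bound uniformly across all prime powers with the correct dependence on $(n,k)$, since the prime-power Kloosterman sums require separate stationary-phase (Sali\'e) treatment and their behaviour degenerates when $p$ divides the arguments; keeping the constant explicit enough to land exactly at $2$ rather than a larger numerical value will require care in the completion and multiplicativity bookkeeping.
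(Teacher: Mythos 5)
Your proposal follows essentially the same route as the paper: partial (Abel) summation to reduce the weighted sum to $2n \max_{t \leqslant n} \bigl| \sum_{a \leqslant t, \, (a,n)=1} e_n(km\overline{a}) \bigr|$, followed by a Weil-plus-completion bound $n^{1/2}\tau(n)(n,mk)^{1/2}\log(en)$ for the incomplete Kloosterman sums and the observation that $(n,mk)=(n,k)$ since $(m,n)=1$. The only difference is that the paper imports the incomplete-sum estimate ready-made from \cite[(6.21), p.~454]{bor20}, whereas you sketch its standard proof (completion kernel, Chinese Remainder Theorem factorization, Weil/Sali\'e local bounds), which is precisely the content of that citation.
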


\begin{proof}
From Weil's bound applied to Kloosterman sums and a completion device, it is known \cite[(6.21) p. 454]{bor20} that, for all positive integers $t \leqslant n$
$$\left| \sum_{\substack{a \leqslant t \\ (a,n)=1}} e_n \left( km \overline{a} \right ) \right| \leqslant n^{1/2} \tau(n) (n,mk)^{1/2} \log (en)$$
so that, by partial summation, we derive
$$\left| \sum_{\substack{a=1 \\ (a,n)=1}}^n a \, e_n \left( km \overline{a} \right ) \right|  \leqslant 2n \max_{t \leqslant n} \left| \sum_{\substack{a \leqslant t \\ (a,n)=1}}  e_n \left( km \overline{a} \right ) \right| \leqslant 2 n^{3/2} \tau(n) (n,mk)^{1/2} \log (en)$$
and we conclude the proof noticing that $(n,mk) = (n,k)$ since $(n,m)=1$.
\end{proof}

\noindent
The next lemma is well-known, but we provide here a proof for the sake of completeness.

\begin{lem}
\label{le:alk11}
Let $q \in \Z_{\geqslant 2}$ and $k \in \Z_{\geqslant 1}$. If $q \nmid k$, then
$$\sum_{b=1}^{q-1} b \, e_q (-kb) = \frac{-q}{1-e_q(-k)}.$$
\end{lem}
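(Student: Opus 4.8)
The plan is to evaluate the finite sum $\sum_{b=1}^{q-1} b\, e_q(-kb)$ in closed form by recognizing it as a derivative (or a weighted geometric series) of the standard geometric sum. Writing $z := e_q(-k)$, the hypothesis $q \nmid k$ guarantees $z \neq 1$, so the geometric series $\sum_{b=0}^{q-1} z^b$ and its manipulations are valid. The cleanest route is to start from the geometric identity $\sum_{b=0}^{q-1} z^b = \frac{z^q - 1}{z - 1}$ and differentiate the finite sum $\sum_{b=0}^{q-1} x^b$ with respect to $x$, then multiply by $x$ and specialize $x = z$; this produces $\sum_{b=1}^{q-1} b\, z^b$ directly.

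More concretely, first I would observe that $z^q = e_q(-kq) = e^{-2\pi i k} = 1$, since $k$ is an integer. This is the key simplification: it forces the numerator $z^q - 1$ to vanish and collapses the generic derivative formula dramatically. Starting from
\begin{equation*}
   \sum_{b=1}^{q-1} b\, x^b = x \frac{d}{dx} \sum_{b=0}^{q-1} x^b = x \frac{d}{dx}\left( \frac{x^q - 1}{x - 1} \right) = \frac{(q-1)x^{q+1} - q x^q + x}{(x-1)^2},
\end{equation*}
I would then set $x = z$ and use $z^q = 1$, so the numerator becomes $(q-1)z - q + z = q z - q = q(z-1)$, giving $\sum_{b=1}^{q-1} b\, z^b = \frac{q(z-1)}{(z-1)^2} = \frac{q}{z-1}$. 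Substituting back $z = e_q(-k)$ yields exactly $\frac{q}{e_q(-k) - 1} = \frac{-q}{1 - e_q(-k)}$, which is the claimed identity.

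An equivalent and perhaps more transparent derivation avoids differentiation entirely. Let $T := \sum_{b=1}^{q-1} b\, z^b$. Multiplying by $z$ and reindexing gives $zT = \sum_{b=2}^{q} (b-1) z^{b}$; subtracting, and using $z^q = 1$ together with $\sum_{b=1}^{q-1} z^b = -1$ (which follows from $\sum_{b=0}^{q-1} z^b = \frac{z^q-1}{z-1} = 0$), one isolates $T$ in terms of a single geometric sum. The only point requiring genuine care is the division by $z - 1$, which is legitimate precisely because $q \nmid k$ ensures $z \neq 1$; this is the hypothesis I would flag explicitly. No step presents a real obstacle here: the entire content is the observation $z^q = 1$ combined with a standard summation identity, so the proof is short and self-contained.
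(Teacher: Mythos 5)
Your proof is correct. Your main route (differentiating $\sum_{b=0}^{q-1} x^b = \frac{x^q-1}{x-1}$, multiplying by $x$, then specializing at $x=z$ with $z^q=1$) is a mild variant of what the paper does: the paper instead multiplies $T=\sum_{b=1}^{q-1} b z^b$ directly by $(1-z)$, telescopes, and uses $z^q=1$ together with $\sum_{b=0}^{q-1} z^b=0$ to get $T(1-z)=-q$ — which is precisely the ``equivalent derivation'' you sketch in your final paragraph. The two computations carry the same content (the whole lemma rests on $z^q=1$, $z\neq 1$); the paper's version just avoids the intermediate closed-form rational function $\frac{(q-1)x^{q+1}-qx^q+x}{(x-1)^2}$ and the double pole at $x=1$, while your derivative route has the small advantage of producing the general formula valid for any $x\neq 1$ before the simplification. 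You also correctly isolate the one point of care — that $q\nmid k$ guarantees $z\neq 1$ so division by $1-z$ is legitimate — which is the same hypothesis the paper invokes implicitly in its opening line.
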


\begin{proof}
If $z^q = 1$, $z \ne 1$, then $\sum_{b=0}^{q-1} z^b = (1-z^q) / (1-z) = 0$. Hence
\begin{align*}
   \left( \sum_{b=1}^{q-1} bz^{b-1} \right) (1-z) &= z + 2z^2 + \dotsb + (q-1) z^{q-1} - z^2 - 2z^3 - \dotsb - (q-1)z^q \\
   &= z + z^2 + \dotsb + z^{q-1} - (q-1) z^q = -1 -(q-1) = -q.
\end{align*}
\end{proof}

\begin{lem}
\label{le:sum_tech}
For all $n \in \Z_{\geqslant 2}$ and $k \in \Z_{\geqslant 1}$,
$$\sum_{\substack{b=1 \\ (b,n)=1}}^n b  \, e_n \left( -kb \right) = -n \sum_{\substack{d \mid n \\ \frac{n}{d} \nmid k}} \frac{\mu(d)}{1-e_{n/d}(-k)} + \frac{n}{2} \sum_{\substack{d \mid n \\ d < n \\ \frac{n}{d} \mid k}} \mu(d) \left( \frac{n}{d} - 1 \right).$$
\end{lem}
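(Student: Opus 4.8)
The plan is to strip off the coprimality condition by M\"{o}bius inversion and reduce the inner sum to the elementary one evaluated in Lemma~\ref{le:alk11}. Using the identity $\sum_{d \mid (b,n)} \mu(d) = 1$ when $(b,n)=1$ and $0$ otherwise, and interchanging the order of summation, I would substitute $b = dc$ to obtain
$$\sum_{\substack{b=1\\(b,n)=1}}^n b\,e_n(-kb) = \sum_{d\mid n} \mu(d) \sum_{\substack{1\leqslant b\leqslant n\\ d\mid b}} b\,e_n(-kb) = \sum_{d\mid n} d\,\mu(d) \sum_{c=1}^{n/d} c\,e_{n/d}(-kc),$$
where I have used that the $b$ with $d\mid b$ and $1\leqslant b\leqslant n$ are exactly $b=dc$ for $1\leqslant c\leqslant n/d$, together with $e_n(-kdc) = e_{n/d}(-kc)$.

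Next, writing $q := n/d$, I would evaluate the inner sum by peeling off the top term $c=q$, which contributes $q\,e_q(-kq) = q$ since $e_q(-kq)=1$. For the remaining sum $\sum_{c=1}^{q-1} c\,e_q(-kc)$ two cases arise: if $q\nmid k$, then $e_q(-k)\neq 1$ and Lemma~\ref{le:alk11} gives $-q/(1-e_q(-k))$; if $q\mid k$, then every summand equals $c$ and the sum collapses to $\tfrac{1}{2}(q-1)q$. Hence
$$\sum_{c=1}^{q} c\,e_q(-kc) = q + \sum_{c=1}^{q-1} c\,e_q(-kc) = \begin{cases} q - \dfrac{q}{1-e_q(-k)}, & q\nmid k,\\ \dfrac{q(q+1)}{2}, & q\mid k.\end{cases}$$

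Multiplying by $d\mu(d)$, using $dq = n$, and summing over $d\mid n$ then splits the result into the stated main term $-n\sum_{n/d\nmid k}\mu(d)/(1-e_{n/d}(-k))$ together with two leftover pieces: the contribution $n\sum_{n/d\nmid k}\mu(d)$ coming from the peeled top term in the first case, and $\tfrac{n}{2}\sum_{n/d\mid k}\mu(d)(n/d+1)$ from the second case. The only delicate bookkeeping, which I expect to be the crux of the argument, is to collapse these two leftovers into the asserted closed form. Here I would invoke $\sum_{d\mid n}\mu(d)=0$ (valid for $n\geqslant 2$) to rewrite $\sum_{n/d\nmid k}\mu(d) = -\sum_{n/d\mid k}\mu(d)$; substituting this and collecting the terms over $\{d:n/d\mid k\}$ turns the leftover into $\tfrac{n}{2}\sum_{n/d\mid k}\mu(d)\bigl[(n/d+1)-2\bigr] = \tfrac{n}{2}\sum_{n/d\mid k}\mu(d)(n/d-1)$. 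Finally, the term $d=n$ carries the vanishing factor $(n/d-1)=0$, so it may be discarded; this is exactly what produces the restriction $d<n$ in the second sum and completes the proof.
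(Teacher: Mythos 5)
Your proof is correct and follows essentially the same route as the paper: M\"{o}bius inversion to remove the coprimality condition, the substitution $b=dc$, Lemma~\ref{le:alk11} for the case $\frac{n}{d}\nmid k$, direct summation when $\frac{n}{d}\mid k$, and the identity $\sum_{d\mid n}\mu(d)=0$ for $n\geqslant 2$. The only (cosmetic) difference is bookkeeping: the paper cancels all the peeled top terms $n\mu(d)$ at once via $\sum_{d\mid n}\mu(d)=0$ before splitting into cases, whereas you absorb the peeled term into the $\frac{n}{d}\mid k$ case and recombine at the end --- both correctly yield the factor $\left(\frac{n}{d}-1\right)$ and the restriction $d<n$.
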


\begin{proof}
We have
\begin{align*}
   \sum_{\substack{b=1 \\ (b,n)=1}}^n b \, e_n \left( -kb \right) &= \sum_{d \mid n} d \mu(d) \, \sum_{b \leqslant n/d} b \, e_n (-kdb) \\
   &= \sum_{d \mid n} d \mu(d) \, \sum_{b \leqslant n/d} b e_{n/d} (-kb) \\
   &= \sum_{d \mid n} d \mu(d) \left( \frac{n}{d} + \sum_{b \leqslant \frac{n}{d} - 1} b \, e_{n/d} (-kb) \right) \\
   &= n \underbrace{\sum_{d \mid n} \mu(d)}_{=0, \; \textrm{since} \; n \geqslant 2} + \left( \sum_{\substack{d \mid n \\ \frac{n}{d} \nmid k}} + \sum_{\substack{d \mid n \\ \frac{n}{d} \mid k}} \right) d \mu(d)\sum_{b \leqslant \frac{n}{d} - 1} b \, e_{n/d} (-kb) \\
   &= \sum_{\substack{d \mid n \\ \frac{n}{d} \nmid k}} d \mu(d) \, \frac{-n/d}{1-e_{n/d}(-k)} + \sum_{\substack{d \mid n \\ \frac{n}{d} \mid k}} d \mu(d) \sum_{b \leqslant \frac{n}{d} - 1} b \\
   &=  -n \sum_{\substack{d \mid n \\ \frac{n}{d} \nmid k}} \frac{\mu(d)}{1-e_{n/d}(-k)} + \frac{n}{2} \sum_{\substack{d \mid n \\ d < n \\ \frac{n}{d} \mid k}} \mu(d) \left( \frac{n}{d} - 1 \right)
\end{align*}
where we used Lemma~\ref{le:alk11} in the $1$st sum of the penultimate line.
\end{proof}

We will make use of the following principle.

\begin{lem}
\label{le:sum_f(n,k)}
Let $f : \Z_{\geqslant 1} \to \C$ be any arithmetic function. Then, for any $n \in \Z_{\geqslant 1}$,
$$\sum_{k=1}^n f(k) = \sum_{d \mid n} \ \sum_{\substack{h=1 \\ \left( \frac{n}{d}, h \right )=1}}^{n/d} f(hd).$$
\end{lem}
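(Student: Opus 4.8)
The plan is to prove the identity by a simple re-indexing: I would establish a bijection between the index set $\{1, \dots, n\}$ appearing on the left and the set of pairs $(d, h)$ appearing on the right, namely those with $d \mid n$, $1 \leqslant h \leqslant n/d$, and $(n/d, h) = 1$. Once the bijection is in place, both sides sum $f$ over exactly the same multiset of arguments, and the stated equality follows immediately by regrouping.

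Concretely, to each $k \in \{1, \dots, n\}$ I would associate the pair $(d, h)$ defined by $d := (k, n)$ and $h := k/d$. Since $d \mid k$ and $d \mid n$, both $h$ and $n/d$ are integers, and $1 \leqslant h \leqslant n/d$ because $1 \leqslant k \leqslant n$. The key observation is that $(h, n/d) = (k/d, n/d) = 1$, which holds precisely because $d = (k,n)$: dividing two integers by their greatest common divisor always produces coprime quotients. Thus $k = hd$ and the pair $(d,h)$ lies in the index set of the right-hand double sum.

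For the reverse direction, given any divisor $d \mid n$ and any $h$ with $1 \leqslant h \leqslant n/d$ and $(h, n/d) = 1$, I would set $k := hd$, so that $1 \leqslant k \leqslant n$. It then remains to check that this recovers the original data, i.e.\ that $(k, n) = d$: writing $n = d \cdot (n/d)$ gives $(k, n) = (hd,\, d \cdot (n/d)) = d\,(h, n/d) = d$, whence also $k/d = h$. Therefore the two maps are mutually inverse and define the desired bijection.

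The argument presents no genuine obstacle; the only point requiring care is the verification that $d = (k,n)$ is equivalent to the conjunction $d \mid n$, $d \mid k$ and $(k/d, n/d) = 1$, which is exactly the standard characterisation of the greatest common divisor via coprimality of the quotients. Regrouping $\sum_{k=1}^n f(k)$ according to the value of $d = (k,n)$, and using the bijection to parametrise the terms with a fixed $d$ by the admissible values of $h$, then produces the double sum on the right and completes the proof.
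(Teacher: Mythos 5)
Your proof is correct and follows essentially the same route as the paper: grouping the terms of $\sum_{k=1}^n f(k)$ according to $d=(k,n)$ and then substituting $k=hd$. You merely spell out in both directions the bijection that the paper's proof leaves implicit, which is a fine (if slightly more verbose) way to present the same argument.
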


\begin{proof}
Set $d=(n,k)$ and sum over all values of $d \mid n$, assuming by convention that the inner sum vanishes if $d \nmid k$, yielding
$$\sum_{k=1}^n f(k) = \sum_{d \mid n} \ \sum_{\substack{k=1 \\ d \mid k \\ \left( \frac{n}{d}, \frac{k}{d} \right )=1}}^n f(k).$$
The change $k=hd$ then gives the asserted result.
\qedhere
\end{proof}

\begin{lem}
\label{le:sum}
Let $q \in \Z_{\geqslant 2}$ and $\ell \in \Z_{\geqslant 1}$. Then
$$\sum_{\substack{a \; (\md q) \\ q \nmid a \ell}} \frac{1}{\left| 1 - e_q(\pm \, a \ell) \right|} \leqslant \frac{q}{2}  \log \left( \frac{eq}{2(\ell,q)}\right)  < \frac{q}{2}  \log (eq).$$
\end{lem}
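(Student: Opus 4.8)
The plan is to reduce everything to a one-dimensional trigonometric sum and then estimate it by comparison with a harmonic sum. Since $|1 - e_q(\pm a\ell)| = 2\lvert\sin(\pi a\ell/q)\rvert$ (the sign is irrelevant because $|1-e^{i\theta}|=|1-e^{-i\theta}|$), the sum in question equals $\tfrac12\sum_{a}\lvert\sin(\pi a\ell/q)\rvert^{-1}$. First I would set $d=(\ell,q)$ and write $\ell=d\ell'$, $q=dq'$ with $(\ell',q')=1$, so that $\sin(\pi a\ell/q)=\sin(\pi a\ell'/q')$. As $a$ runs through a complete residue system modulo $q=dq'$, the class of $a\ell'$ modulo $q'$ is attained exactly $d$ times (reduction mod $q'$ is $d$-to-one, and multiplication by the invertible $\ell'$ is a bijection mod $q'$), while the condition $q\nmid a\ell$ is exactly $a\ell'\not\equiv 0\pmod{q'}$. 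Hence the sum collapses to
$$\frac{d}{2}\sum_{b=1}^{q'-1}\frac{1}{\lvert\sin(\pi b/q')\rvert}=:\frac{d}{2}\,T(q').$$

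Since $dq'=q$ and $q'=q/d$, it then suffices to prove the clean inequality $T(Q)\le Q\log(eQ/2)$ for every integer $Q\ge 2$: multiplying by $d/2$ turns the right-hand side into $\tfrac{q}{2}\log(eq/2d)=\tfrac{q}{2}\log\bigl(eq/2(\ell,q)\bigr)$, which is the asserted bound, and the final strict inequality follows at once from $2(\ell,q)\ge 2$. The analytic heart is the estimate of $T(Q)$. The key elementary input is the concavity bound $\sin(\pi t)\ge 2t$ on $[0,\tfrac12]$, which gives $1/\sin(\pi b/Q)\le Q/(2b)$ for $1\le b\le Q/2$. Using the symmetry $\sin(\pi b/Q)=\sin(\pi(Q-b)/Q)$ to fold the sum onto $b\le Q/2$ and then $H_N=\sum_{b\le N}1/b\le 1+\log N$, I would bound $T(Q)$ by a harmonic sum of length roughly $Q/2$.

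The one point that needs care — and the only place the two cases genuinely differ — is the parity of $Q$. For $Q$ odd the folding is exact, $T(Q)\le Q\,H_{(Q-1)/2}\le Q\bigl(1+\log(Q/2)\bigr)=Q\log(eQ/2)$, and we are done. For $Q$ even there is an unpaired middle term at $b=Q/2$, where $\sin(\pi/2)=1$, producing an extra $+1$: one gets $T(Q)\le 1+Q\,H_{Q/2-1}$. To absorb this surplus I would combine $H_{Q/2-1}\le 1+\log(Q/2-1)$ with the refinement $\log(1-2/Q)\le -2/Q\le -1/Q$, which yields $\log(Q/2-1)\le\log(Q/2)-1/Q$ and hence $1+Q\,H_{Q/2-1}\le Q\log(eQ/2)$. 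The boundary values ($Q=2$ directly, or $q'=1$ when $q\mid\ell$, where both sides are consistent with the empty sum) are checked by hand. I expect this even-$Q$ bookkeeping to be the main obstacle; everything else is the standard sine-to-harmonic comparison.
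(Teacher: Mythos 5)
Your proof is correct and follows essentially the same route as the paper's: both reduce, via the gcd $(\ell,q)$, to a primitive sum over residues modulo $q/(\ell,q)$ (your $d$-to-one reduction is the paper's count of solutions of $a\ell \equiv j \pmod q$), bound $|1-e_q(x)|$ from below by the same linear estimate (your $\sin(\pi t)\geqslant 2t$ on $[0,\tfrac12]$ is exactly the paper's $|e(x)-e(y)|\geqslant 4\|x-y\|$), fold onto the first half, and finish with $\sum_{t\leqslant N}1/t\leqslant 1+\log N$. The only cosmetic difference is that you handle the parity of $Q$ and its middle term explicitly, whereas the paper absorbs it by bounding both halves of the folded sum by $\sum_{t\leqslant r/2} r/t$.
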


\begin{proof}
The inequality $|e(x) - e(y) | \geqslant 4 \| x-y \|$ \cite[Exercise~114]{bor20} and periodicity yield
$$\sum_{\substack{a \; (\md q) \\ q \nmid a \ell}} \frac{1}{\left| 1 - e_q(\pm \, a \ell) \right|} \leqslant \frac{1}{4} \sum_{\substack{a=1 \\  q \nmid a \ell}}^{q-1} \frac{1}{\| a \ell/q \|} = \frac{1}{4} \sum_{j=1}^{q-1} \frac{1}{\| j/q \|} \, \sum_{\substack{a=1 \\ a \ell \equiv j \, (\md q)}}^{q-1} 1.$$
For fixed $\ell$, $j$ and $q$, the linear congruence $a \ell \equiv j \; (\md q)$ has solutions in $a$ if and only if $(\ell, q) \mid j$ and in this case there are $(\ell, q)$ solutions $(\md q)$. Hence, with the notation $q/(\ell, q) := r$,

\begin{align*}
    \sum_{\substack{a \; (\md q) \\ q \nmid a \ell}} \frac{1}{\left| 1 - e_q(\pm \, a \ell) \right|} & \leqslant \frac{(\ell, q)}{4} \sum_{\substack{j=1 \\ (\ell, q) \mid j}}^{q-1} \frac{1}{\| j/q \|} = \frac{(\ell, q)}{4} \sum_{t=1}^{r-1} \frac{1}{\| t/r \|} \\   
   & = \frac{(\ell, q)}{4} \left( \sum_{t \leqslant r/2} \frac{r}{t} + \sum_{r/2 < t \leqslant r - 1} \frac{r}{r-t} \right)\\
   & \leqslant \frac{r(\ell, q)}{2} \sum_{t \leqslant r/2} \frac{1}{t} \leqslant \frac{q}{2} \log \left( er/2 \right) < \frac{q}{2}  \log (eq)
\end{align*}
as required.
\end{proof}

\begin{lem}
\label{le:sum_gcd(n,k)}
Let $n \in \Z_{\geqslant 2}$ and $d$ be a divisor of $n$. Then
$$\sum_{\substack{k =1 \\ \frac{n}{d} \nmid k}}^{n-1} \frac{(n,k)^{1/2}}{\left| 1 - e_{n/d}(- k) \right|} \leqslant \tfrac{1}{2} \, n^{1/2} \, \sigma_{1/2} (n) \log(en).$$
\end{lem}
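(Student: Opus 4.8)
The weight $(n,k)^{1/2}$ is the only real complication here, so the plan is to remove it by organising the sum according to the value of $(n,k)$, and then to match each resulting inner sum to a complete residue sum so that Lemma~\ref{le:sum} applies.

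First I would feed $f(k):=(n,k)^{1/2}/\left|1-e_{n/d}(-k)\right|$ (with the convention $f(k)=0$ whenever $\frac{n}{d}\mid k$, which also kills the $k=n$ term and reconciles the range $1\leqslant k\leqslant n-1$ with the range $1\leqslant k\leqslant n$ of Lemma~\ref{le:sum_f(n,k)}) into the splitting principle of Lemma~\ref{le:sum_f(n,k)}. Writing $k=h\delta$ with $\delta\mid n$ and $(n/\delta,h)=1$, one has $(n,h\delta)=\delta$, so the weight becomes the constant $\delta^{1/2}$ and factors out of the inner sum. Discarding the coprimality condition $(n/\delta,h)=1$ (every term being positive) yields the upper bound
$$\sum_{\delta\mid n}\delta^{1/2}\sum_{\substack{h=1\\ \frac{n}{d}\nmid h\delta}}^{n/\delta}\frac{1}{\left|1-e_{n/d}(-h\delta)\right|}.$$

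Next, for each fixed $\delta$ I would reduce the modulus. Setting $q=n/d$, $g=(\delta,q)$, $\delta_1=\delta/g$ and $q_1=q/g$ (so that $(\delta_1,q_1)=1$), one checks $e_q(-h\delta)=e_{q_1}(-h\delta_1)$ and that the condition $\frac{n}{d}\nmid h\delta$ is equivalent to $q_1\nmid h$. The key arithmetic observation is that $\delta\mid n=dq$ forces $\delta_1\mid d$, whence $n/\delta=d'q_1$ with $d':=d/\delta_1\in\Z_{\geqslant1}$; that is, the range $\{1,\dots,n/\delta\}$ consists of exactly $d'$ full periods of the $q_1$-periodic summand. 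The inner sum therefore equals $d'$ copies of a single complete residue sum modulo $q_1$, each of which Lemma~\ref{le:sum} bounds (with parameters $q_1$ and $\ell=\delta_1$, using $(\delta_1,q_1)=1$) by $\tfrac{q_1}{2}\log(eq_1/2)$. Since $d'q_1=n/\delta$ and $q_1\leqslant n$, the inner sum is at most $\tfrac{n}{2\delta}\log(en)$.

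Finally, summing over $\delta\mid n$ gives
$$\frac{n}{2}\log(en)\sum_{\delta\mid n}\delta^{-1/2}=\frac{n}{2}\log(en)\cdot n^{-1/2}\sigma_{1/2}(n)=\tfrac12\,n^{1/2}\sigma_{1/2}(n)\log(en),$$
where the middle identity is the substitution $\delta\mapsto n/\delta$. This is exactly the claimed bound. I expect the modulus reduction, together with the divisibility $\delta_1\mid d$, to be the delicate step: it is precisely what guarantees that $[1,n/\delta]$ is a whole number of periods of $e_{n/d}(-h\delta)$, so that Lemma~\ref{le:sum} applies with no boundary loss and the constant $\tfrac12$ is recovered exactly.
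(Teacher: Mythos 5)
Your proof is correct, and it follows the paper's own skeleton: decompose by $\delta=(n,k)$ via Lemma~\ref{le:sum_f(n,k)}, discard the coprimality condition on $h$ (all terms being positive), estimate the inner sums by Lemma~\ref{le:sum}, and conclude with $\sum_{\delta\mid n}\delta^{-1/2}=n^{-1/2}\sigma_{1/2}(n)$. The single step you execute differently is the middle one. The paper flips the modulus through the identity $e_{n/d}(-h\delta)=e_{n/\delta}(-hd)$ (valid since $\tfrac{h\delta}{n/d}=\tfrac{h\delta d}{n}=\tfrac{hd}{n/\delta}$), so that $h$ runs over a single complete residue system modulo $n/\delta$ and Lemma~\ref{le:sum} applies at once with $q=n/\delta$, $\ell=d$; the congruence-counting inside that lemma, which produces the factor $(\ell,q)$ and reduces to the modulus $r=q/(\ell,q)$, then performs exactly the reduction you carry out by hand. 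Your route --- reduce $\delta/(n/d)$ to lowest terms $\delta_1/q_1$, observe $\delta_1\mid d$ so that $[1,n/\delta]$ consists of $d'=d/\delta_1$ full periods, and apply Lemma~\ref{le:sum} in its coprime case --- is equally valid and recovers the identical constant $\tfrac12$; in fact your $q_1$ coincides with the paper's $r$, since $(d,n/\delta)=(\delta_1 d',q_1 d')=d'(\delta_1,q_1)=d'$, so the two arguments are the same computation organized differently. Two points you leave implicit are harmless: when $q_1=1$ the condition $q_1\nmid h$ fails identically, so the inner sum is empty and the hypothesis $q\geqslant 2$ of Lemma~\ref{le:sum} is never violated; and the $\delta=n$ term vanishes for the same reason, consistent with your convention that kills $k=n$.
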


\begin{proof}
Using Lemma~\ref{le:sum_f(n,k)} we first derive
$$\sum_{\substack{k =1 \\ \frac{n}{d} \nmid k}}^{n-1} \frac{(n,k)^{1/2}}{\left| 1 - e_{n/d}(- k) \right|} = \sum_{\substack{\delta \mid n \\ \delta < n}} \delta^{1/2} \, \sum_{\substack{h = 1 \\ \left( \frac{n}{\delta} , h \right) = 1 \\ \frac{n}{d} \nmid h \delta}}^{\frac{n}{\delta}-1} \frac{1}{\left| 1 - e_{n/d}(- h \delta) \right|} = \sum_{\substack{\delta \mid n \\ \delta < n}} \delta^{1/2} \, \sum_{\substack{h = 1 \\ \left( \frac{n}{\delta} , h \right) = 1 \\ \frac{n}{\delta} \nmid h d}}^{\frac{n}{\delta}-1} \frac{1}{\left| 1 - e_{n/\delta}(- h d) \right|}.$$
Notice that $\frac{n}{\delta} \geqslant 2$ since $\delta \mid n$ and $\delta < n$. Now Lemma~\ref{le:sum} with $q = \frac{n}{\delta}$ and $\ell = d$ yields
$$\sum_{\substack{k =1 \\ \frac{n}{d} \nmid k}}^{n-1} \frac{(n,k)^{1/2}}{\left| 1 - e_{n/d}(- k) \right|} \leqslant \frac{n}{2} \sum_{\delta \mid n} \delta^{-1/2} \log \left( \frac{en}{\delta}\right) \leqslant \tfrac{1}{2} \, n^{1/2} \, \sigma_{1/2} (n) \log(en).$$
\end{proof}

\subsection{Proof of Theorem~\ref{th:k=2}}

The orthogonality relations for additive characters yield
$$S_2(n,m) = \frac{1}{n} \sum_{\substack{a=1 \\ (a,n)=1}}^n a \sum_{\substack{b=1 \\ (b,n)=1}}^n b \sum_{k=1}^n e_n \left( k (m\overline{a}-b) \right) = \frac{1}{n} \sum_{k=1}^n \ \sum_{\substack{a=1 \\ (a,n)=1}}^n a \, e_n \left( km \overline{a} \right) \sum_{\substack{b=1 \\ (b,n)=1}}^n b \, e_n \left( -k b \right).$$
We start by isolating the term corresponding to $k=n$, so that
\begin{equation}
   S_2(n,m) = \tfrac{1}{4} n \left( \varphi(n) \right)^2 + \frac{1}{n} \sum_{k=1}^{n-1} \ \sum_{\substack{a=1 \\ (a,n)=1}}^n a \, e_n \left( km \overline{a} \right) \sum_{\substack{b=1 \\ (b,n)=1}}^n b \, e_n \left( -k b \right) := \tfrac{1}{4} n \left( \varphi(n) \right)^2 + R_2(n,m). \label{eq:step_1}
\end{equation}

Applying Lemma~\ref{le:sum_tech}, we first derive
\begin{align*}
   R_2(n,m) &= \frac{1}{n} \sum_{k=1}^{n-1} \ \sum_{\substack{a=1 \\ \left(a, n \right) =1}}^n a \, e_n \left( km \overline{a} \right ) \left( -n \sum_{\substack{d \mid n \\ \frac{n}{d} \nmid k}} \frac{\mu(d)}{1-e_{n/d}(-k)} + \frac{n}{2} \sum_{\substack{d \mid n \\ d < n \\ \frac{n}{d} \mid k}} \mu(d) \left( \frac{n}{d} - 1 \right) \right) \\
   &= - \sum_{d \mid n} \mu(d) \ \sum_{\substack{k=1 \\ \frac{n}{d} \nmid k}}^{n-1} \frac{1}{1-e_{n/d}(-k)} \sum_{\substack{a=1 \\ \left(a, n \right) =1}}^n a \, e_n \left( km \overline{a} \right ) + \frac{1}{2} \sum_{\substack{d \mid n \\ d < n}} \mu(d) \ \left( \frac{n}{d} - 1 \right)  \sum_{\substack{k=1 \\ \frac{n}{d} \mid k}}^{n-1} \ \sum_{\substack{a=1 \\ \left(a, n \right) =1}}^n a \, e_n \left( km \overline{a} \right ) \\
   &:= R_{21}(n,m) + R_{22}(n,m).
\end{align*}

Lemmas~\ref{le:kloos} and~\ref{le:sum_gcd(n,k)} yield
\begin{align}
   \left| R_{21}(n,m) \right| & \leqslant \sum_{d \mid n} \ \sum_{\substack{k=1 \\ \frac{n}{d} \nmid k}}^{n-1} \frac{1}{\left| 1-e_{n/d}(-k) \right|} \left|  \sum_{\substack{a=1 \\ \left(a, n \right) =1}}^n a \, e_n \left( km \overline{a} \right ) \right| \notag \\
   & \leqslant 2 n^{3/2} \tau(n) \, \log (en) \sum_{d \mid n} \ \sum_{\substack{k=1 \\ \frac{n}{d} \nmid k}}^{n-1} \frac{(n,k)^{1/2}}{\left| 1-e_{n/d}(-k) \right|} \notag \\
   & \leqslant n^{2} \, \sigma_{1/2}(n) \, \tau(n) (\log en)^2 \sum_{d \mid n} 1 = n^{2} \, \sigma_{1/2}(n) \, (\tau(n) \log en)^2. \label{eq:step_2}
\end{align}

Now setting $k= \frac{hn}{d}$ in the sum over $k$ in $R_{22}(n,m)$ yields
\begin{align*}
   R_{22}(n,m) &= \frac{1}{2} \sum_{\substack{d \mid n \\ d < n}} \mu(d) \ \left( \frac{n}{d} - 1 \right)  \sum_{h=1}^{d-1} \ \sum_{\substack{a=1 \\ \left(a, n \right) =1}}^n a \, e_d \left( hm \overline{a} \right ) \\
   &= \frac{1}{2} \sum_{\substack{d \mid n \\ 1 < d < n}} \mu(d) \ \left( \frac{n}{d} - 1 \right)  \sum_{h=1}^{d-1} \ \sum_{\substack{a=1 \\ \left(a, n \right) =1}}^n a \, e_d \left( hm \overline{a} \right ).
\end{align*}
It should be pointed out that $d \mid n$ and $d > 1$ imply that $d \nmid \overline{a}$. Now inverting the summations, we derive for all $d \mid n$ such that $1 < d < n$
\begin{align*}
   \sum_{h=1}^{d-1} \ \sum_{\substack{a=1 \\ \left(a, n \right) =1}}^n a \, e_d \left( hm \overline{a} \right ) &= \sum_{\substack{a=1 \\ \left(a, n \right) =1}}^n a \ \sum_{h=1}^{d-1} e_d \left( hm \overline{a} \right ) = \sum_{\substack{a=1 \\ \left(a, n \right) =1}}^n a \, e_d \left(m \overline{a} \right ) \times \frac{1-e_d \left( (d-1) m\overline{a} \right )}{1-e_d \left( m\overline{a} \right )} \\
   &= \sum_{\substack{a=1 \\ \left(a, n \right) =1}}^n a \, e_d \left( m\overline{a} \right ) \times \frac{1-e_d \left( - m\overline{a} \right )}{1-e_d \left(m \overline{a} \right )} = - \sum_{\substack{a=1 \\ \left(a, n \right) =1}}^n a \, e_d \left( m\overline{a} \right ) \, e_d \left( -m\overline{a} \right ) \\
   &= - \sum_{\substack{a=1 \\ \left(a, n \right) =1}}^n a = - \frac{n \varphi(n)}{2}
\end{align*}
so that, since $n \geqslant 2$
\begin{equation}
   R_{22}(n,m) = - \frac{n \varphi(n)}{4} \sum_{\substack{d \mid n \\ 1 < d < n}} \mu(d) \ \left( \frac{n}{d} - 1 \right) = \frac{n^2 \varphi(n)}{4} - \frac{n \varphi(n)^2}{4} - \frac{n \varphi(n)}{4}. \label{eq:step_3}
\end{equation}

The result follows by reporting \eqref{eq:step_2} and \eqref{eq:step_3} in \eqref{eq:step_1}. \qed

\section{Proof of Theorem~\ref{th:mean_square}}

Let $n \geqslant 3$ odd. Using orthogonality relations for Dirichlet characters, we get by the proof of Theorem~\ref{th:k_sup_3}
\begin{align*}
   & \sum_{\substack{m=1 \\ (m,n)=1}}^n \left| S_2(n,m) - 4^{-1} n^2 \varphi(n) \right|^2 \\
   & \qquad = \frac{1}{\varphi(n)^2} \sum_{\substack{m=1 \\ (m,n)=1}}^n \left| \sum_{\substack{\chi \, (\md n) \\ \chi \neq \chi_0}} \overline{\chi}(m) \left( \sum_{a=1}^n a \chi(a) \right)^2 \right|^2 \\
   & \qquad \qquad = \frac{1}{\varphi(n)^2} \sum_{\substack{\chi_1 \, (\md n) \\ \chi_1 \neq \chi_0}} \ \sum_{\substack{\chi_2 \, (\md n) \\ \chi_2 \neq \chi_0}} \left( \sum_{a=1}^n a \chi_1(a) \right)^2 \left( \sum_{a=1}^n a \overline{\chi_2} (a) \right)^2 \underbrace{\sum_{\substack{m=1 \\ (m,n)=1}}^n \chi_1(m) \, \overline{\chi_2}(m)}_{= \, \varphi(n) \ \textrm{if} \ \chi_1 = \chi_2, \ 0 \ \textrm{otherwise}} \\
   & \qquad \qquad \qquad = \frac{1}{\varphi(n)} \sum_{\substack{\chi \, (\md n) \\ \chi \neq \chi_0}} \left| \sum_{a=1}^n a \chi(a) \right|^4 = \frac{1}{\varphi(n)} \sum_{\substack{\chi \, (\md n) \\ \chi(-1) = -1}} \left| \sum_{a=1}^n a \chi(a) \right|^4
\end{align*}
and the proof follows by using Lemma~\ref{le:xu08} with $\ell = 0$.
\qed

\subsection*{Acknowledgments}

\Addresses

\end{document}